\newtheorem{theorem}{Theorem}[section]
\newtheorem{lemma}{Lemma}[section]
\newtheorem{corollary}{Corollary}[section]
\newtheorem{proposition}{Proposition}[section]
\theoremstyle{definition}
\newtheorem{definition}[theorem]{Definition}
\theoremstyle{remark}
\theoremstyle{remark}
\newtheorem{remark}{Remark}[section]
\newtheorem{rule-def}[theorem]{Rule}
\author[S. Ashraf]{Salman Ashraf}
\address{Department of Mathematics, Applied Science Cluster, University of Petroleum and Energy Studies (UPES), Dehradun, Uttarakhand, 248007, India}
\email{ashrafsalman869@gmail.com}
\author[H. Rafeiro]{Humberto Rafeiro}
\address{United Arab Emirates University, College of Sciences, Department of Mathematical Sciences, P.O. Box 15551, Al Ain, Abu Dhabi, United Arab Emirates}
\email{rafeiro@uaeu.ac.ae}
\begin{document}

\title[Integral operators on generalized weighted central Morrey  \(\ldots\)]{Integral Operators on Generalized Weighted Central Morrey Spaces over Local Fields}

\begin{abstract}
We introduce generalised weighted central Morrey spaces over local fields and obtain a quantitative estimate for the boundedness of the Hardy--Hilbert-type integral operator on these newly introduced spaces, albeit specifically in the context of power-weighted spaces. A similar estimate is also obtained for the Hardy--Littlewood--P\'olya operator. 
\end{abstract}

\maketitle

\section{Introduction}



The classical Morrey spaces, denoted $M_{p, \lambda}(\mathbb{R}^n)$, were introduced by Morrey \cite{Morrey} and have proven valuable, among other things, in the analysis of the local behaviour of solutions to second-order elliptic partial differential equations \cite{AA1,Morrey,sawano2020morrey}.  Recall that for $0 \leq \lambda \leq n$ and $1 \leq p < \infty$, we say $f \in M_{p, \lambda}$ if 
\begin{align} \label{Introuction1}
\|f\|_{M_{p,\lambda}(\mathbb{R}^n)} := \sup_{x \in \mathbb{R}^n, \, r > 0}  r^{-\lambda/p}\|f\|_{L_p(B(x,r))} < \infty. 
\end{align}
Generalised Morrey spaces $M_{p, \phi}(\mathbb{R}^n)$ are defined by replacing the power term $r^\lambda$ in \eqref{Introuction1} with an appropriate function $\phi: (0, \infty) \rightarrow \mathbb{R}_{>0}$ (see \cite{Morrey3,sawano2020morrey, Morrey1}). Over the past few decades, various fundamental operators have been extensively studied in both classical and generalised Morrey spaces, as well as in generalised local Morrey spaces. For details on the boundedness of some classical operators in harmonic analysis such as maximal operators, fractional integral operators, and singular integral operators on these spaces, we refer the reader to \cite{1,9,10,Raf,11}.

In harmonic analysis, it is important to study weighted estimates for the aforementioned classical operators, among other things, because of their application to PDEs. The boundedness of such operators in weighted Lebesgue spaces was initially established in earlier work (see \cite{W4,W1,W2}). Komori and Shirai \cite{W3} introduced weighted Morrey spaces and examined the boundedness of classical operators in these spaces. In \cite{Morrey3}, Nakamura extended this study to generalised weighted Morrey spaces.



Recall that a local field $K$ is defined as a totally disconnected, locally compact, non-discrete, and complete field. Examples of such fields include $p$-adic fields and fields of formal Laurent series over finite fields, often referred to as $p$-series fields. The classification of local fields is briefly reviewed in Section \ref{sec:2}. Local fields differ from Euclidean spaces as they are totally disconnected and have a non-Archimedean metric, unlike the structure of Euclidean spaces. Since classical differentiation is not applicable in local fields, this limitation makes the study of function space theory in local fields particularly interesting, as it requires the development of alternative analytical tools, such as pseudo-differential operators, to understand smoothness, approximation, and regularity properties in these settings (see \cite{besov4}).

Harmonic analysis on local fields has been developed in numerous studies by Taibleson \cite{T1,T2,T3,MHT}. Furthermore, pseudo-differential equations and stochastic processes over local fields were introduced in \cite{Pseudo}. The theory of $p$-adic analysis was explored by Vladimirov, Volovich, and Zelonov \cite{VV}. Function spaces over local fields help to extend harmonic analysis on local fields, particularly in studying the boundedness of classical operators on them. We refer to the reader \cite{QJ1,QJ2,Ashraf,bib1,BJ3,adic4,adic10,Ho,Nurul,K2,adic1} for the boundedness of classical operators on function spaces such as Lebesgue, Besov, Triebel--Lizorkin and Morrey spaces over local fields and its special case, such as the $p$-adic fields.

Behera \cite{Behera1} recently introduced the Hardy and Hardy--Littlewood--P\'olya operators over a local field $K$ as follows:
\begin{align} \label{Behera1}
\mathcal{H}f(s) = \dfrac{1}{|s|}~\int_{B(0,|s|)} f(t)\di t, \qquad s \in K \setminus \{0\}
\end{align} 
and
\begin{align} \label{Behera2}
\mathcal{T}f(s) = \int_{K} \dfrac{f(t)}{\text{max}\{|s|,|t|\}}\di t, \qquad  s \in K, 
\end{align} 
respectively.

The author obtained boundedness results of \eqref{Behera1} and \eqref{Behera2}  
on weighted Lebesgue spaces $L^p(K, |x|^\alpha \di x).$

In this paper, we have two primary aims. First, motivated by the work of Nakamura \cite{Morrey3}, we introduce  generalised power-weighted central Morrey spaces over local fields, which will be  denoted by $M_{r,\phi}(\omega_\alpha)$ and find conditions on function $\phi: \mathbb{Z} \rightarrow (0, \infty)$ such that the characteristic function of an arbitrary ball belongs to $M_{r,\phi}(\omega_\alpha).$ Second, we define integral operators induced by homogeneous kernel, which serves as a generalization of \eqref{Behera1} and \eqref{Behera2}, see \eqref{H1} and investigate the boundedness of introduced operators on generalized power weighted central Morrey spaces over local fields via mapping properties of dilation operators on these spaces.


This paper is organised as follows. In Section \ref{sec:2}, we give the definition, classification and basic preliminaries of local fields. In Section \ref{sec:3}, we define the generalised power-weighted central Morrey space over local fields $M_{r,\phi}(\omega_\alpha)$. We also give conditions on the function $\phi: \mathbb{Z} \rightarrow (0, \infty)$ such that the characteristic function of $B^k, k \in \mathbb{Z}$, belongs to $M_{r,\phi}(\omega_\alpha).$ We also establish the boundedness of the dilation operator on generalised power weighted central Morrey space over local fields. In Section \ref{sec:4}, we introduce the Hardy--Hilbert-type integral operator $\mathscr{T}$ on a local field. In Section \ref{sec:5}, we compute the operator norm of $\mathscr{T}$ on $M_{r,\phi}(\omega_\alpha).$

\section{Local Fields}\label{sec:2}

Let $K$ be a field equipped with a non-discrete topology such that both its additive group $K^+$ and its multiplicative group $K^*=K\setminus\{0\}$ are locally compact Abelian groups. In this situation, $K$ must either be connected real and complex number fields or totally disconnected.

In this paper, by a local field we mean a field $K$ which is locally compact, non-discrete and totally disconnected. These fields have been thoroughly classified (see \cite{MHT}). The $p$-adic fields are examples of local fields. 

Let $K$ be a local field. Then there is an integer $q=p^c,$ where $p$ is a prime and $c$ is a positive integer, and a natural norm $|\cdot|~ : K \rightarrow \mathbb{R}_{\geq 0}$ such that for every non-zero $x \in K$, $|x| = q^k$ for some integer $k$ and $|0|=0.$ This norm is known as the \textit{absolute value} or \textit{valuation} on $K$ and serves as a non-Archimedean norm on $K,$ that is $|x+y| \leq \text{max} \{|x|, |y|\},$  for all $x,y \in K$ and $|x+y| = \text{max} \{|x|, |y|\}~ \text{if}~ |x| \neq |y|.$ 

For a given  $k \in \mathbb{Z},$ we introduce the following:
\begin{equation*}
B^k := \{y \in K \mid |y| \leq q^{-k}\}, \quad 
 S^k := B^k\setminus B^{k+1}= \{y \in K \mid |y| = q^{-k}\},
\end{equation*} 
where $B^k$ is the ball centred at $0$ with radius $q^{-k}$ and $S^k$ its boundary. 

Let $\di x$ be the Haar measure on the locally compact Abelian group $K^{+}.$  For a measurable subset $E$ of $K$, $|E|$ denote the Haar measure of $E,$ i.e., $|E|=\int_K \chi_E(x)\di x$, where $\chi_E$ is the characteristic function of $E.$ We normalise the Haar measure such that $|B^0|=|\{y \in K \mid |y| \leq 1\}| = 1$. With respect to this normalization, we have
\begin{align*}
|B^k|=q^{-k}, \qquad |S^k|=q^{-k}(1-q^{-1}). 
\end{align*}

For $\alpha \in K\setminus \{0\}$, we have  $\di (\alpha x) =|\alpha| \di x.$  Also note that, if $f \in L^1(K),$ the integral can be expressed as
\begin{align*}
\int_{K} f(x)\di x = \sum\limits_{\gamma=-\infty}^{\infty} \int_{S^\gamma} f(x)\di x.
\end{align*}
Moreover, the change of variable formula in $K$ is given by:
\begin{align} \label{Change1}
\int_{K} f(x)\di x = |y| \int_{K} f(yx)\di x, \quad y \in K \setminus \{0\}.
\end{align}
For a more detailed discussion and proofs of the results covered in this section, we refer to the book by Taibleson \cite{MHT} and the work \cite{BJ3}.

\section{Generalized weighted central Morrey spaces on local fields} \label{sec:3}
A \textit{weight function} (or a \textit{weight}) $\omega$ on $K$ is a non-negative, locally integrable function on $K.$ For a weight $\omega$ and a measurable subset $A \subset K,$ we denote the weight of $A$ by:
\begin{align*}
\omega(A) \coloneqq \int_{A} \omega(x)\di x.
\end{align*} 

Now we introduce generalised weighted central Morrey spaces with power weights on local fields.
\begin{definition} \label{Gen_Morrey}
Let $1\leq r < \infty$, $\phi: \mathbb Z  \to \mathbb R_{>0}$ be a function, and 
$\omega_\alpha(x) = |x|^\alpha$, $\alpha >0$, be the power function. The \emph{generalized weighted central Morrey space} with weight $\omega_\alpha$ over the local field $K$, denoted by  $M_{r,\phi}(\omega_\alpha)$, is defined as the set of all $f \in L^r_{\mathrm{loc}}(K)$ such that 
\begin{align*}
\|f\|_{M_{r,\phi}(\omega_\alpha)} = \sup_{k \in \mathbb{Z}}  \phi(k)\Bigg( \dfrac{1}{|B^k|} \int_{B^k}|f(y)|^r \omega_\alpha(y) \di y\Bigg)^{\frac{1}{r}} < \infty.
\end{align*}
\end{definition}

We recover the power weighted central Morrey space $M^r_t(\omega_\alpha)$ and the Lebesgue space $L^r(\omega_\alpha)$ under appropriate $\phi(k)$, namely:
\begin{align*}
M_{r,\phi}(\omega_\alpha) = \begin{cases}
M_t^r(\omega_\alpha), \quad &~\text{if}~~~\phi(k)=|B^k|^{\frac{1}{t}},\\
L^r(\omega_\alpha),\quad &~\text{if}~~~ \phi(k)=|B^k|^{\frac{1}{r}}.
\end{cases}
\end{align*}


Inclusion of the characteristic function $\chi_{B}$ of a ball $B$ in a function space is essential to analyse the structure and properties of function spaces.
However, computing the norm $\|\chi_{B^k}\|_{M_{r,\phi}(\omega_\alpha)}$ for the characteristic function $\chi_{B^k}$ of a ball $B^k$  presents significant challenges. To address this difficulty, we impose the following condition on $\phi$ throughout this paper:

\begin{definition}
Let $1 \leq r < \infty$, $\alpha >0$, and $q$ be a prime power. 
We say that a function $\phi: \mathbb{Z} \rightarrow \mathbb{R}_{>0}$ belongs to the class $\Phi_{r,q}$ if there exists $C > 0$ such that: 
\begin{align} \label{Condition1}
\phi(k) \leq C, \quad \text{if } k \in \mathbb N
\end{align}
and
\begin{align} \label{Condition2}
\phi(k) \leq C q^{-k/r}, \quad \text{if } k \in \mathbb{Z}\setminus \mathbb{N}.
\end{align}
\end{definition}

\begin{lemma} \label{char_norm}
Let $K$ be a local field,  $1 \leq r < \infty$, $\alpha >0$, and $\phi \in \Phi_{r,q}$, where $q=|B|^{-1}$. 
Then 
\begin{align*}
 \|\chi_{B^{\eta}}\|_{M_{r,\phi}(\omega_\alpha)}   \leq  C \bigg(\dfrac{\omega_\alpha(B^{\eta})}{|B^{\eta}|}\bigg)^{\frac{1}{r}}\max\{1, q^{-\eta/r}\},
\end{align*}
where $C>0$ and $\chi_{B^{\eta}}$ is the characteristic function of $B^{\eta}$, $\eta \in \mathbb Z$.
\end{lemma}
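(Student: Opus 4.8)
The plan is to estimate $\|\chi_{B^\eta}\|_{M_{r,\phi}(\omega_\alpha)}$ by unwinding the definition and splitting the supremum over $k \in \mathbb{Z}$ according to the position of $k$ relative to $\eta$. First I would write
\[
\|\chi_{B^\eta}\|_{M_{r,\phi}(\omega_\alpha)} = \sup_{k \in \mathbb{Z}} \phi(k) \left( \frac{1}{|B^k|} \int_{B^k} \chi_{B^\eta}(y)\, \omega_\alpha(y)\, \di y \right)^{1/r} = \sup_{k \in \mathbb{Z}} \phi(k) \left( \frac{\omega_\alpha(B^k \cap B^\eta)}{|B^k|} \right)^{1/r}.
\]
Since balls in a local field are nested, $B^k \cap B^\eta = B^{\max\{k,\eta\}}$. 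This naturally produces two regimes: $k \geq \eta$, where the intersection is $B^k$ itself, and $k < \eta$, where the intersection is $B^\eta$.

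In the regime $k \leq \eta$ (so $B^k \cap B^\eta = B^\eta$), the quantity becomes $\phi(k)\,(\omega_\alpha(B^\eta)/|B^k|)^{1/r} = \phi(k)\, q^{k/r}\, (\omega_\alpha(B^\eta))^{1/r}$, using $|B^k| = q^{-k}$. The factor $\phi(k) q^{k/r}$ is exactly what the class $\Phi_{r,q}$ is designed to control: by \eqref{Condition2} it is bounded by $C$ when $k \in \mathbb{Z}\setminus\mathbb{N}$, and by \eqref{Condition1} it is bounded by $C q^{k/r} \leq C q^{\eta/r}$ when $k \in \mathbb{N}$ with $k \leq \eta$ (this last bound being vacuous/absent if $\eta < 0$). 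Either way the contribution is $\lesssim (\omega_\alpha(B^\eta))^{1/r}\max\{1, q^{\eta/r}\}$. In the regime $k > \eta$ (so $B^k \cap B^\eta = B^k$), the quantity is $\phi(k)\,(\omega_\alpha(B^k)/|B^k|)^{1/r}$, and here I would need a monotonicity-type comparison: since $\omega_\alpha(x) = |x|^\alpha$ with $\alpha > 0$, the averaged weight $\omega_\alpha(B^k)/|B^k|$ is increasing in the radius, i.e. decreasing in $k$, so $\omega_\alpha(B^k)/|B^k| \leq \omega_\alpha(B^\eta)/|B^\eta|$ for $k > \eta$; combined with $\phi(k) \leq C$ from \eqref{Condition1} (valid since $k > \eta$ and... one must be slightly careful when $\eta < 0$, handling small $k$ via \eqref{Condition2} and the explicit decay) this gives a contribution $\lesssim (\omega_\alpha(B^\eta)/|B^\eta|)^{1/r}$.

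Assembling the two regimes and writing $\omega_\alpha(B^\eta) = (\omega_\alpha(B^\eta)/|B^\eta|)\cdot|B^\eta| = (\omega_\alpha(B^\eta)/|B^\eta|)\, q^{-\eta}$, the $k \leq \eta$ bound reads $\lesssim (\omega_\alpha(B^\eta)/|B^\eta|)^{1/r}\, q^{-\eta/r}\max\{1,q^{\eta/r}\} = (\omega_\alpha(B^\eta)/|B^\eta|)^{1/r}\max\{q^{-\eta/r}, 1\}$, which matches the claimed right-hand side; the $k > \eta$ bound is absorbed into the same expression since $\max\{1, q^{-\eta/r}\} \geq 1$. I expect the main obstacle to be the bookkeeping around the sign of $\eta$ — in particular making sure the split between \eqref{Condition1} and \eqref{Condition2} is applied correctly for the finitely many intermediate indices when $\eta$ is negative — and verifying the monotonicity of $\omega_\alpha(B^k)/|B^k|$ cleanly, which follows from the explicit computation $\omega_\alpha(B^k) = \int_{B^k}|y|^\alpha\,\di y = \sum_{j \geq k} q^{-j\alpha} q^{-j}(1-q^{-1})$, a convergent geometric series proportional to $q^{-k(\alpha+1)}$, so that $\omega_\alpha(B^k)/|B^k| \sim q^{-k\alpha}$ is manifestly decreasing in $k$.
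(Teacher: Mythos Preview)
Your proposal is correct and follows essentially the same approach as the paper: both compute $\omega_\alpha(B^k\cap B^\eta)$ via the nesting of balls, split according to whether $k\gtrless\eta$ and whether $k\in\mathbb N$ or $k\in\mathbb Z\setminus\mathbb N$, apply \eqref{Condition1}--\eqref{Condition2}, and use the explicit geometric-series evaluation $\omega_\alpha(B^k)\sim q^{-k(\alpha+1)}$ to get the monotonicity $\omega_\alpha(B^k)/|B^k|\le \omega_\alpha(B^\eta)/|B^\eta|$ for $k>\eta$. The only cosmetic difference is that the paper writes out four flat cases whereas you group into two regimes; your parenthetical caveat about $\eta<0$ with $\eta<k\le 0$ is exactly the paper's Case~4, where the bound comes out as $C\,\omega_\alpha(B^\eta)^{1/r}$ (not $C\,(\omega_\alpha(B^\eta)/|B^\eta|)^{1/r}$ as your sentence suggests), but this is still dominated by the claimed right-hand side since $\max\{1,q^{-\eta/r}\}=q^{-\eta/r}$ in that range.
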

\begin{proof}
By the Definition \ref{Gen_Morrey}, we have 
\begin{align*}
\|\chi_{B^{\eta}}\|_{M_{r,\phi}(\omega_\alpha)} &= \sup_{k \in \mathbb{Z}}  \phi(k)\Bigg( \dfrac{1}{|B^k|} \int_{B^k}|\chi_{B^{\eta}}(y)|^r \omega_\alpha(y) \di y\Bigg)^{\frac{1}{r}} \\
& = \sup_{k \in \mathbb{Z}}  \phi(k)\Bigg( \dfrac{ \omega_\alpha(B^{\eta} \cap B^k)}{|B^k|}\Bigg)^{\frac{1}{r}}.
\end{align*}

There are four cases to consider:

\medskip 

Case 1 ($k \in \mathbb{N}$ and $k > \eta$): Since 
 $B^k \subset B^{\eta}$, we have 
\begin{align} 
\|\chi_{B^{\eta}}\|_{M_{r,\phi}(\omega_\alpha)} = \sup_{k \in \mathbb{Z}}  \phi(k)\Bigg( \dfrac{ \omega_\alpha(B^k)}{|B^k|}\Bigg)^{\frac{1}{r}}.
\end{align}

Also
\begin{align} \nonumber
\omega_\alpha(B^k) &= \int_{B^k} |y|^\alpha \di y \\ \nonumber
&= \sum_{l=-\infty}^{-k} \int_{\{y : |y| = q^l\}} |y|^\alpha \di y\\ \nonumber
&= \sum_{l=-\infty}^{-k} q^{\alpha l}|\{y : |y| = q^l\}| \\ \nonumber
&= \sum_{l=-\infty}^{-k} q^{\alpha l}(q^l-q^{l-1}) \\  \label{power1}
&= \dfrac{(q-1)q^{\alpha-k(\alpha+1)}}{q^{\alpha+1}-1}. 
\end{align}

Therefore, by \eqref{Condition1} and \eqref{power1}, we have
\begin{align} \label{pf1}
\phi(k)\Bigg( \dfrac{ \omega_\alpha(B^k)}{|B^k|}\Bigg)^{\frac{1}{r}} \leq C \bigg(\dfrac{\omega_\alpha(B^{\eta})}{|B^{\eta}|}\bigg)^{\frac{1}{r}}.
\end{align}

Case 2 ($k \in \mathbb N$ and $\eta >k$): The inclusion   
 $B^{\eta} \subset B^{k}$ and \eqref{Condition1} yield 
\begin{align} \label{pf11}
 \phi(k)\Bigg( \dfrac{ \omega_\alpha(B^{\eta})}{|B^k|}\Bigg)^{\frac{1}{r}} \leq C \bigg(\dfrac{\omega_\alpha(B^{\eta})}{|B^{\eta}|}\bigg)^{\frac{1}{r}}.
\end{align}

 Case 3 ($k \in \mathbb{Z} \setminus \mathbb{N}$ and $\eta >k$): By  \eqref{Condition2} and the fact that $B^{\eta} \cap B^k= B^{\eta}$,   we have
\begin{align} \label{pf2}
\phi(k)\Bigg( \dfrac{ \omega_\alpha(B^{\eta})}{|B^k|}\Bigg)^{\frac{1}{r}} \leq C \omega_\alpha(B^{\eta})^{\frac{1}{r}}.
\end{align}

Case 4  ($k \in \mathbb{Z} \setminus \mathbb{N}$ and $k > \eta$): By \eqref{Condition2} we obtain
\begin{align} \label{pf22}
\phi(k)\Bigg( \dfrac{ \omega_\alpha(B^{k})}{|B^k|}\Bigg)^{\frac{1}{r}} \leq C \omega_\alpha(B^{\eta})^{\frac{1}{r}}.
\end{align}

Therefore, by \eqref{pf1}-\eqref{pf22}, we get the estimate
\begin{align*}
\|\chi_{B^{\eta}}\|_{M_{r,\phi}(\omega_\alpha)} &\leq  C \max\Bigg\{\bigg(\dfrac{\omega_\alpha(B^{\eta})}{|B^{\eta}|}\bigg)^{\frac{1}{r}},~\omega_\alpha(B^{\eta})^{\frac{1}{r}}\Bigg\} \\
& \leq  C \bigg(\dfrac{\omega_\alpha(B^{\eta})}{|B^{\eta}|}\bigg)^{\frac{1}{r}}\max\{1, q^{-\eta/r}\},
\end{align*}
which completes the proof.
\end{proof}

\begin{remark}
    The norm estimate for characteristic functions of ball $\chi_{B^{\eta}}$ in $M_{r,\phi}(\omega_\alpha)$ implies that all locally compactly supported functions $C_c(K)$ belong to $M_{r,\phi}(\omega_\alpha).$ This follows because any $f \in C_c(K)$ can be approximated by finite linear combinations of $\chi_{B^{\eta}}$ and the assumption 
    $$\bigg(\dfrac{\omega_\alpha(B^{\eta})}{|B^{\eta}|}\bigg)^{\frac{1}{r}}\max\{1, q^{-\eta/r}\} < \infty$$
    in Lemma \ref{char_norm}, allows us to view the space of locally compactly supported functions $C_c(K)$ as a natural subspace of $M_{r,\phi}(\omega_\alpha).$
\end{remark}

As a consequence of Lemma \ref{char_norm}, we will show the function $\langle y \rangle^{-N}$ where $\langle y \rangle = \max\{1, |y|\}$ belongs to $M_{r,\phi}(\omega_\alpha).$

\begin{corollary}
Let $K$ be a local field,  $1 \leq r < \infty$, $\alpha >0$, and $\phi \in \Phi_{r,q}$, where $q=|B|^{-1}$. If $N > \dfrac{\alpha+1}{r},$ then  $\langle y \rangle^{-N} \in M_{r,\phi}(\omega_\alpha).$
\end{corollary}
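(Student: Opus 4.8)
The plan is to reduce the statement to Lemma \ref{char_norm} by a standard layer-cake (dyadic annulus) decomposition of $\langle y\rangle^{-N}$ adapted to the ball structure of $K$. First I would observe that $\langle y\rangle = \max\{1,|y|\}$ takes the value $1$ on $B^0$ and the value $q^{-k}$ on $S^{-k}$ for $k \leq -1$ (i.e.\ on the shells outside the unit ball), so $\langle y\rangle^{-N}$ is comparable, up to the constant factor depending on $q$ and $N$, to the function $\sum_{k=0}^{\infty} q^{-kN}\chi_{S^{-k}}$ plus $\chi_{B^0}$. Since $S^{-k}\subset B^{-k}$, we have the pointwise bound $\langle y\rangle^{-N} \leq \chi_{B^0}(y) + \sum_{k=1}^{\infty} q^{-kN}\chi_{B^{-k}}(y)$, and hence, by the triangle inequality for the $M_{r,\phi}(\omega_\alpha)$ norm,
\begin{align*}
\|\langle y\rangle^{-N}\|_{M_{r,\phi}(\omega_\alpha)} \leq \|\chi_{B^0}\|_{M_{r,\phi}(\omega_\alpha)} + \sum_{k=1}^{\infty} q^{-kN}\|\chi_{B^{-k}}\|_{M_{r,\phi}(\omega_\alpha)}.
\end{align*}

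Next I would invoke Lemma \ref{char_norm} with $\eta = -k$ (so $\eta \leq 0$ and thus $\max\{1,q^{-\eta/r}\} = q^{k/r}$) together with the explicit computation of $\omega_\alpha(B^\eta)$ already carried out in the proof of that lemma (equation \eqref{power1}), which gives $\omega_\alpha(B^{-k})/|B^{-k}| \asymp q^{-\alpha k/r \cdot r}\cdot q^{k}$... more precisely $\omega_\alpha(B^{-k}) \asymp q^{k(\alpha+1)}$ and $|B^{-k}| = q^{k}$, so $\big(\omega_\alpha(B^{-k})/|B^{-k}|\big)^{1/r} \asymp q^{k\alpha/r}$. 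Combining, $\|\chi_{B^{-k}}\|_{M_{r,\phi}(\omega_\alpha)} \lesssim q^{k\alpha/r}\cdot q^{k/r} = q^{k(\alpha+1)/r}$. Substituting back, the series becomes $\sum_{k=1}^{\infty} q^{-kN} q^{k(\alpha+1)/r} = \sum_{k=1}^{\infty} q^{-k(N - (\alpha+1)/r)}$, which converges precisely when $N > (\alpha+1)/r$ — exactly the hypothesis. The $\chi_{B^0}$ term is finite by Lemma \ref{char_norm} with $\eta = 0$.

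The main obstacle, such as it is, is bookkeeping rather than conceptual: one must be careful that the layer-cake bound genuinely dominates $\langle y\rangle^{-N}$ pointwise (the shells with $|y|\geq 1$ are the only ones where $\langle y\rangle \neq 1$, and on those $\langle y\rangle^{-N} = |y|^{-N}$), and one must correctly track the geometric-series ratio $q^{-(N-(\alpha+1)/r)} < 1 \iff N > (\alpha+1)/r$. One should also confirm that the comparison $\langle y\rangle^{-N}\chi_{S^{-k}} \leq q^{-kN}\chi_{B^{-k}}$ for $k\geq 1$ and $\langle y\rangle^{-N}\chi_{B^0} = \chi_{B^0}$ uses only $|y| = q^{-k}$ on $S^{-k}$, which follows immediately from the normalization $|y|=q^{-k}$ on $S^{-k}$ recorded in Section \ref{sec:2}. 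A cleaner packaging avoids the infinite sum of norms by instead estimating directly inside the definition: for each fixed $k\in\mathbb Z$ one bounds $\phi(k)\big(|B^k|^{-1}\int_{B^k}\langle y\rangle^{-Nr}\omega_\alpha(y)\,dy\big)^{1/r}$ by splitting the integral over $B^k$ into the shells it contains and summing the resulting geometric series, then taking the supremum over $k$; this is the route I would actually write, since it keeps all constants transparent and sidesteps any convergence-of-norm-series subtlety, but the dyadic-decomposition heuristic above is what motivates it.
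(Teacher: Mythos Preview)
Your proposal is correct and follows essentially the same route as the paper: split $\langle y\rangle^{-N}$ into $\chi_{B^0}$ plus the dyadic shell pieces $q^{-kN}\chi_{S^{-k}}$ for $k\geq 1$, dominate each shell by $\chi_{B^{-k}}$, apply Lemma~\ref{char_norm} with $\eta=-k$ together with \eqref{power1} to get $\|\chi_{B^{-k}}\|_{M_{r,\phi}(\omega_\alpha)}\lesssim q^{k(\alpha+1)/r}$, and sum the resulting geometric series. The only slip is in your opening sentence, where the indexing ``$q^{-k}$ on $S^{-k}$ for $k\leq -1$'' should read ``$q^{k}$ on $S^{-k}$ for $k\geq 1$''; the displayed pointwise bound and everything after it are already stated with the correct indices.
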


\begin{proof}
The function $\langle y \rangle^{-N}$ can be written as:
\begin{align*}\langle y \rangle^{-N}=\begin{cases}
1, & \text{if } |y| \leq 1, \\
|y|^{-N}, & \text{if } |y| > 1.
\end{cases}
\end{align*}

So we can write:
\begin{align}
    \|\langle y \rangle^{-N}\|_{M_{r,\phi}(\omega_\alpha)} \leq  \|\chi_{B^{0}}\|_{M_{r,\phi}(\omega_\alpha)}  ~+ ~\||y|^{-N}\chi_{K \setminus B^{0}}\|_{M_{r,\phi}(\omega_\alpha)}.
\end{align}

By Lemma \ref{char_norm}, we have 
\begin{align} \label{max_func_1}
    \|\chi_{B^{0}}\|_{M_{r,\phi}(\omega_\alpha)} \leq  C \bigg(\dfrac{\omega_\alpha(B^{0})}{|B^{0}|}\bigg)^{\frac{1}{r}}\max\{1, q^{-0/r}\} = C \omega_\alpha(B^{0})^{\frac{1}{r}}.
\end{align}

For $K \setminus B^0,$ we use the decomposition:
\begin{align*}
    K \setminus B^0 = \bigcup_{\eta=1}^{\infty} S^{-\eta}.
\end{align*}
So,
\begin{align*}
    |y|^{-N}\chi_{K \setminus B^{0}} = \sum_{\eta=1}^{\infty} q^{-\eta N} \chi_{S^{-\eta}}.
\end{align*}
By Lemma \ref{char_norm}, for each $\eta \geq 1$, we have
\begin{align}
    \|\chi_{B^{-\eta}}\|_{M_{r,\phi}(\omega_\alpha)} \leq  C \bigg(\dfrac{\omega_\alpha(B^{-\eta})}{|B^{-\eta}|}\bigg)^{\frac{1}{r}}\max\{1, q^{\eta/r}\} = C\omega_\alpha(B^{-\eta})^{\frac{1}{r}}.
\end{align}
Since $S^{-\eta} \subset B^{-\eta},$ we have $\chi_{S^{-\eta}}(x) \leq \chi_{B^{-\eta}}(x).$ Hence,
\begin{align} \nonumber
    \||y|^{-N}\chi_{K \setminus B^{0}}\|_{M_{r,\phi}(\omega_\alpha)} & \leq \sum_{\eta=1}^{\infty} q^{-\eta N} \|\chi_{B^{-\eta}}\|_{M_{r,\phi}(\omega_\alpha)} \\  \label{max_func_2}
    & \leq C \sum_{\eta=1}^{\infty} q^{-\eta N} \omega_\alpha(B^{-\eta})^{\frac{1}{r}}.
\end{align}

Now, by combining the estimates, \eqref{max_func_1} and \eqref{max_func_2}, we have 
\begin{align*} 
    \|\langle y \rangle^{-N}\|_{M_{r,\phi}(\omega_\alpha)} & \leq  C \bigg(\omega_\alpha(B^{0})^{\frac{1}{r}}  ~+ ~\sum_{\eta=1}^{\infty} q^{-\eta N} \omega_\alpha(B^{-\eta})^{\frac{1}{r}}\bigg) \\
    & \leq C \bigg[\bigg(\dfrac{(q-1)q^{\alpha}}{q^{\alpha+1}-1}\bigg)^{\frac{1}{r}}  ~+ ~\sum_{\eta=1}^{\infty} q^{-\eta N} \bigg(\dfrac{(q-1)q^{\alpha+\eta(\alpha+1)}}{q^{\alpha+1}-1}\bigg)^{\frac{1}{r}}\bigg] \\ 
    & \leq C\bigg(\dfrac{(q-1)q^{\alpha}}{q^{\alpha+1}-1}\bigg)^{\frac{1}{r}} \bigg[ 1 ~+ ~\sum_{\eta=1}^{\infty} q^{-\eta(N-\frac{\alpha+1}{r})}\bigg] < \infty.
\qedhere
\end{align*}
\end{proof}

\subsection{Dilation operator in generalised weighted central Morrey spaces}

Let $\tau \in K^* $ and, for any function $f$ on $K,$ consider the dilation operator of the form 
\begin{equation*}
(\mathcal{D}_\tau f)(x) = f(\tau x),\qquad x \in K.
\end{equation*}
Recall that a function $\phi$ is said to be \emph{submultiplicative} if
\begin{equation}\label{submultiplicative}
\phi(s+t) \leq C \phi(s)\phi(t).
\end{equation}
The class of functions $\phi \in \Phi_{r,q}$ satisfying additionally the condition \eqref{submultiplicative} is denoted by $ \Phi_{r,q}^{\textrm{sm}}$.

It should be noted that submultiplicativity is often defined differently by various authors; we follow, e.g., \cite{Kyprianou2006-qe}. For an alternative definition, see \cite{sawano2020morrey}.

\begin{lemma} \label{Dilation}
Let $K$ be a local field, $1 < r < \infty$, $\alpha >0$,  and $ \phi \in \Phi_{r,q}^{\emph{sm}}$, where $q=|B|^{-1}$. 
Then there exists $C>0$ such that
\begin{align}\label{111}
\|\mathcal{D}_\tau f\|_{M_{r,\phi}(\omega_\alpha)} & \leq C |\tau|^{-(1+\alpha)/r} \|f\|_{M_{r,\phi}(\omega_\alpha)}, \quad \text{for all} ~ 0< |\tau| < 1,
\end{align}
and
\begin{align}\label{222}
\|\mathcal{D}_\tau f\|_{M_{r,\phi}(\omega_\alpha)} & \leq  C |\tau|^{-\alpha/r} \|f\|_{M_{r,\phi}(\omega_\alpha)}, \quad \text{for all} ~ 1 \leq |\tau| < \infty.
\end{align}
\end{lemma}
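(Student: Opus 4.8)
The plan is to compute the $M_{r,\phi}(\omega_\alpha)$ norm of $\mathcal{D}_\tau f$ directly from the definition and use the scaling of the Haar measure together with the submultiplicativity of $\phi$. Writing $|\tau| = q^{-m}$ for some $m \in \mathbb{Z}$, I would start from
\begin{align*}
\|\mathcal{D}_\tau f\|_{M_{r,\phi}(\omega_\alpha)} = \sup_{k \in \mathbb{Z}} \phi(k)\Bigg(\frac{1}{|B^k|}\int_{B^k} |f(\tau y)|^r |y|^\alpha \di y\Bigg)^{1/r},
\end{align*}
and apply the change of variable $z = \tau y$ via \eqref{Change1}. Since $|\tau y| \le q^{-k}$ iff $|z| \le q^{-k}|\tau| = q^{-(k+m)}$, the ball $B^k$ is mapped to $B^{k+m}$, and $|y|^\alpha = |z|^\alpha |\tau|^{-\alpha}$, $\di y = |\tau|^{-1}\di z$. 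This gives
\begin{align*}
\frac{1}{|B^k|}\int_{B^k}|f(\tau y)|^r|y|^\alpha \di y = \frac{|\tau|^{-\alpha-1}}{|B^k|}\int_{B^{k+m}}|f(z)|^r|z|^\alpha \di z = |\tau|^{-\alpha-1}\,\frac{|B^{k+m}|}{|B^k|}\cdot\frac{\omega_\alpha\text{-average over }B^{k+m}}{1},
\end{align*}
and since $|B^{k+m}|/|B^k| = q^{-m} = |\tau|$, the prefactor collapses to $|\tau|^{-\alpha}$ times the normalized average over $B^{k+m}$. Thus, after relabeling $j = k+m$,
\begin{align*}
\|\mathcal{D}_\tau f\|_{M_{r,\phi}(\omega_\alpha)} = |\tau|^{-\alpha/r}\sup_{j\in\mathbb Z} \phi(j-m)\Bigg(\frac{1}{|B^{j}|}\int_{B^{j}}|f(z)|^r|z|^\alpha\di z\Bigg)^{1/r}.
\end{align*}

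The next step is to control $\phi(j-m)$ in terms of $\phi(j)$. Here submultiplicativity \eqref{submultiplicative} enters: $\phi(j-m) = \phi(j + (-m)) \le C\,\phi(j)\phi(-m)$. So I obtain
\begin{align*}
\|\mathcal{D}_\tau f\|_{M_{r,\phi}(\omega_\alpha)} \le C\,|\tau|^{-\alpha/r}\,\phi(-m)\,\|f\|_{M_{r,\phi}(\omega_\alpha)}.
\end{align*}
It then remains to bound $\phi(-m)$ using membership in $\Phi_{r,q}$, distinguishing the two regimes for $|\tau|$. When $1 \le |\tau| < \infty$ we have $q^{-m} = |\tau| \ge 1$, i.e. $m \le 0$, so $-m \in \mathbb N\cup\{0\}$ and \eqref{Condition1} gives $\phi(-m)\le C$, yielding \eqref{222}. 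When $0 < |\tau| < 1$ we have $m \ge 1$, so $-m \in \mathbb Z\setminus\mathbb N$ and \eqref{Condition2} gives $\phi(-m) \le C q^{m/r} = C|\tau|^{-1/r}$; combined with the $|\tau|^{-\alpha/r}$ already present this produces $|\tau|^{-(1+\alpha)/r}$, which is \eqref{111}.

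The main obstacle — really the only non-bookkeeping point — is making sure the change of variables interacts correctly with the normalized average: one must track that the two measure factors ($\di y \mapsto |\tau|^{-1}\di z$ from \eqref{Change1} and $|y|^\alpha \mapsto |\tau|^{-\alpha}|z|^\alpha$) combine with the renormalization $1/|B^k|$ versus $1/|B^j|$ so that exactly a clean power $|\tau|^{-\alpha}$ survives inside the $r$-th root argument, giving $|\tau|^{-\alpha/r}$ outside; a sign error here would swap the two exponents. A secondary subtlety is that $\phi(-m)$ must be estimated using whichever of \eqref{Condition1}/\eqref{Condition2} applies, and one should double-check the boundary case $m=0$ (i.e. $|\tau|=1$), where both estimates trivially reduce to $\phi(0)\le C$ and the statement is vacuous. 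Everything else is the routine index juggling indicated above.
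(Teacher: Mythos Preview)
Your proposal is correct and follows essentially the same argument as the paper: change variables $z=\tau y$ to reduce the average over $B^k$ to the average over $B^{k+m}$ with a factor $|\tau|^{-\alpha/r}$, then use submultiplicativity to split $\phi(j-m)\le C\phi(j)\phi(-m)$ and bound $\phi(-m)$ via \eqref{Condition1} or \eqref{Condition2} according to the sign of $m$. The only cosmetic difference is notation ($m$ versus the paper's $l$, and your explicit relabelling $j=k+m$).
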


\begin{proof}
We know that $K^*$ can be written as a disjoint union $\bigcup_{l=-\infty}^{\infty}\{y \mid |y| = q^{-l}\}$ and, since $\tau \in K^*$,  $|\tau| = q^{-l}$ for some $l \in \mathbb{Z}.$  According to Definition \ref{Gen_Morrey}, we obtain
\begin{align*}
\|\mathcal{D}_\tau f\|_{M_{r,\phi}(\omega_\alpha)} &= \sup_{k \in \mathbb{Z}}  \phi(k)\Bigg( \dfrac{1}{|B^k|} \int_{B^k}|f(\tau y)|^r |y|^\alpha \di y\Bigg)^{\frac{1}{r}} \\
& = \sup_{ k \in \mathbb{Z}}  \phi(k)\Bigg( \dfrac{|\tau|^{-1} |\tau|^{-\alpha}}{|B^k|} \int_{B^{k+l}}|f(y)|^r |y|^\alpha \di y\Bigg)^{\frac{1}{r}} \\
& = \sup_{k \in \mathbb{Z}}  \phi(k+l-l)\Bigg( \dfrac{|\tau|^{-\alpha}}{|B^{k+l}|} \int_{B^{k+l}}|f(y)|^r |y|^\alpha \di y\Bigg)^{\frac{1}{r}} \\
 & \leq \sup_{k \in \mathbb{Z}}  \phi(k+l)\phi(-l)\Bigg( \dfrac{|\tau|^{-\alpha}}{|B^{k+l}|} \int_{B^{k+l}}|f(y)|^r |y|^\alpha \di y\Bigg)^{\frac{1}{r}} \\
& = \phi(-l) |\tau|^{-\alpha/r} \|f\|_{M_{r,\phi}(\omega_\alpha)} .
\end{align*}
Using the previous estimate along with equations \eqref{Condition1} and \eqref{Condition2}, we obtain \eqref{111} and \eqref{222}.
\end{proof}

\section{Hardy--Hilbert-type integral operators on local fields} \label{sec:4}

The Hardy--Hilbert-type integral operator $ \mathscr{T} $, on a local field $K$, is defined as:
\begin{align} \label{H1}
\mathscr{T}f(s) = \int_{K^*} \mathscr{K}(|s|,|t|)f(t)\di t,~~~~  s \in K^*
\end{align} 
where 
$\mathscr{K}(s,t)$ is a non-negative measurable function on $(0, \infty) \times (0, \infty)$ that satisfies the condition
\begin{align} \label{ch4_homo}
\mathscr{K}(\xi s,\xi t) = \xi^{-1} \mathscr{K}(s,t),   \quad \xi >0,
\end{align}
which means that it is homogeneous of degree -1.

Examples of such operators are:

\begin{description}
\item[Hardy operator] Choosing 
\begin{align*}
\mathscr{K}(|s|, |t|) = |s|^{-1} \chi_{B(0,|s|)}(t),~~~~~~~~~~s \in K^*
\end{align*}
where $\chi_{B(0,|s|)}$ is the characteristic function of the ball $ B(0,|s|) = \{ t \in K \mid |t| \leq |s|\},$  we get the Hardy operator on $K$
\begin{align} \label{ch4_Hardy}
\mathcal{H}f(s) = \dfrac{1}{|s|}~\int_{B(0,|s|)} f(t)\di t,~~~~~~~~~~~  s \in K^*.
\end{align} 
\item[Hilbert operator] Setting 
\begin{align*}
\mathscr{K}(|s|,|t|) = \dfrac{1}{|s| + |t|},
\end{align*}
we get the Hilbert operator on $K$ 
\begin{align} \label{ch4_Hilbert}
\mathscr{H}f(s) = \int_{K^*} \dfrac{f(t)}{|s| + |t|}\di t,~~~~~~~~  s \in K^*.
\end{align}
\item[Hardy--Littlewood--P\'{o}lya operator] Putting 
\begin{align*}
\mathscr{K}(|s|,|t|) = \dfrac{1}{\text{max}\{|s|,|t|\}},~~~~s \in K^*,
\end{align*}
we obtain the Hardy--Littlewood--P\'{o}lya operator on $K$, given by
\begin{align} \label{Hardy_Littlewood}
\mathscr{P}f(s) = \int_{K^*} \dfrac{f(t)}{\text{max}\{|s|,|t|\}}\di t,~~~~~~~~  s \in K^*. 
\end{align} 
\end{description}

A function $f$ in the local field $K$ is said to be \textit{radial} if $f(x)=f(y)$ whenever $|x|=|y|, ~ x, y \in K.$ In the following proposition, we establish that the norm of the operator $\mathscr{T}$ on the space $M_{r,\phi}(\omega_\alpha),$ is equivalent to the norm of $\mathscr{T}$ when restricted to the subspace of radial functions.

\begin{proposition}
The operator norm of $\mathscr{T} $ on the space $M_{r,\phi}(\omega_\alpha)$ is equivalent to the norm of its restriction to radial functions.
\end{proposition}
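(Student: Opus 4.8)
The plan is to show that for any $f \in M_{r,\phi}(\omega_\alpha)$, replacing $f$ by a suitably chosen radial function does not decrease the relevant quantities, so that the supremum defining $\|\mathscr{T}\|$ is attained (up to a constant) on radial inputs. The key observation is that $\mathscr{T}f(s)$ depends on $f$ only through its spherical averages: since the kernel $\mathscr{K}(|s|,|t|)$ is constant on each sphere $S^\gamma$ (it depends on $t$ only through $|t|$), we may write $\mathscr{T}f(s) = \int_{K^*}\mathscr{K}(|s|,|t|)f(t)\di t = \sum_{\gamma} \mathscr{K}(|s|,q^{-\gamma})\int_{S^\gamma} f(t)\di t$. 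Hence, defining the radialization $\widetilde f$ of $f$ by averaging over spheres, namely $\widetilde f(x) = \frac{1}{|S^\gamma|}\int_{S^\gamma} f(t)\di t$ for $|x| = q^{-\gamma}$, one gets $\mathscr{T}f = \mathscr{T}\widetilde f$ pointwise; moreover $\mathscr{T}\widetilde f$ is itself radial. The remaining point is to control $\|\widetilde f\|_{M_{r,\phi}(\omega_\alpha)}$ by $\|f\|_{M_{r,\phi}(\omega_\alpha)}$, which follows because the power weight $\omega_\alpha$ is radial: on each sphere $S^\gamma$ Jensen's inequality gives $\big(\frac{1}{|S^\gamma|}\int_{S^\gamma}|f|\big)^r \le \frac{1}{|S^\gamma|}\int_{S^\gamma}|f|^r$, and multiplying by the constant value $q^{\gamma\alpha}$ of $\omega_\alpha$ on $S^\gamma$ and integrating over $B^k = \bigcup_{\gamma \ge k} S^\gamma$ yields $\int_{B^k}|\widetilde f|^r\omega_\alpha \le \int_{B^k}|f|^r\omega_\alpha$; taking the weighted average over $B^k$ and the supremum over $k$ gives $\|\widetilde f\|_{M_{r,\phi}(\omega_\alpha)} \le \|f\|_{M_{r,\phi}(\omega_\alpha)}$.

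Putting these together: for the forward inequality, let $\|\mathscr{T}\|_{\mathrm{rad}}$ denote the norm of $\mathscr{T}$ restricted to radial functions. Then for arbitrary $f$,
\begin{align*}
\|\mathscr{T}f\|_{M_{r,\phi}(\omega_\alpha)} = \|\mathscr{T}\widetilde f\|_{M_{r,\phi}(\omega_\alpha)} \le \|\mathscr{T}\|_{\mathrm{rad}}\,\|\widetilde f\|_{M_{r,\phi}(\omega_\alpha)} \le \|\mathscr{T}\|_{\mathrm{rad}}\,\|f\|_{M_{r,\phi}(\omega_\alpha)},
\end{align*}
since $\widetilde f$ is radial. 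Taking the supremum over $\|f\|_{M_{r,\phi}(\omega_\alpha)} \le 1$ gives $\|\mathscr{T}\|_{M_{r,\phi}(\omega_\alpha)} \le \|\mathscr{T}\|_{\mathrm{rad}}$. The reverse inequality is immediate, because radial functions form a subspace of $M_{r,\phi}(\omega_\alpha)$, so $\|\mathscr{T}\|_{\mathrm{rad}} \le \|\mathscr{T}\|_{M_{r,\phi}(\omega_\alpha)}$. Hence the two norms coincide (in fact with constant $1$, i.e.\ they are equal, not merely equivalent).

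The step I expect to require the most care is the verification that $\mathscr{T}f = \mathscr{T}\widetilde f$ and that the sphere-averaging operator is well-defined on all of $M_{r,\phi}(\omega_\alpha)$ — one must check the relevant integrals converge (local integrability of $f$ against the homogeneous kernel on $K^*$, which is where the restriction to $K^*$ in the definition of $\mathscr{T}$ matters) and justify interchanging the sum over spheres with the integral via Fubini/Tonelli using non-negativity after splitting $f$ into real/imaginary and positive/negative parts. The inequality $\|\widetilde f\|_{M_{r,\phi}(\omega_\alpha)} \le \|f\|_{M_{r,\phi}(\omega_\alpha)}$ itself is a routine application of Jensen, relying only on the radiality of $\omega_\alpha$ and the fact that the balls $B^k$ are unions of whole spheres.
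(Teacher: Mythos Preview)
Your proposal is correct and follows essentially the same approach as the paper: define the spherical radialization $\widetilde f$ (the paper writes $\bar f$), show $\mathscr{T}\widetilde f=\mathscr{T}f$ because the kernel depends on $t$ only through $|t|$, and show $\|\widetilde f\|_{M_{r,\phi}(\omega_\alpha)}\le\|f\|_{M_{r,\phi}(\omega_\alpha)}$ via Jensen/H\"older on each sphere (using that $\omega_\alpha$ is radial). The paper carries out the first step by a change of variables $t\mapsto ty$ with $|t|=1$ rather than your direct sphere-by-sphere decomposition, but this is a cosmetic difference, and both arguments yield equality (not just equivalence) of the two operator norms.
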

\begin{proof}
Let $f$ be a function on $K^*.$ Define $\bar{f}(x)$ as the average value of $f$ over the sphere with radius $|x|$ centred at $0.$ Formally, this can be written as:
\begin{align*}
\bar{f}(x)=\dfrac{1}{|x|(1-q^{-1})}\int_{S(0,|x|)} f(y) \di y, \qquad x \in K^*.
\end{align*}
By definition $\bar{f}$ is a radial function. Now
\begin{align*}
\mathscr{T}\bar{f}(x) &= \int_{K^*} \mathscr{K}(|x|,|y|)\bar{f}(y)\di y \\
&=\int_{K^*} \mathscr{K}(|x|,|y|)\Bigg(\dfrac{1}{|y|(1-q^{-1})}\int_{|t|=|y|} f(t) \di t \Bigg)\di y \\
&=\int_{K^*} \mathscr{K}(|x|,|y|)\Bigg(\dfrac{1}{(1-q^{-1})}\int_{|t|=1} f(ty) \di t \Bigg)\di y \\
&=\dfrac{1}{(1-q^{-1})}\int_{|t|=1} \Bigg(\int_{K^*} \mathscr{K}(|x|,|y|)f(ty) \di y\Bigg) \di t \\
&=\dfrac{1}{(1-q^{-1})}\int_{|t|=1} \Bigg(|t|^{-1}\int_{K^*} \mathscr{K}(|x|,|t^{-1}y|)f(y) \di y\Bigg) \di t \\
&=\dfrac{1}{(1-q^{-1})}\int_{|t|=1} \Bigg(\int_{K^*} \mathscr{K}(|tx|,|y|)f(y) \di y\Bigg) \di t \\
&=\dfrac{1}{(1-q^{-1})}\int_{|t|=1} \mathscr Tf(tx) \di t.
\end{align*}
Since $\mathscr{T}f(tx)=\mathscr{T}f(x)$ for all $t \in D^*,$ where $D^*= \{x \in K : |x|=1\}.$  Hence
\begin{align}\label{samegandf}
\mathscr{T}\bar{f}(x)=\dfrac{1}{(1-q^{-1})}\int_{|t|=1} \mathscr Tf(x) \di t = \mathscr{T}f(x).
\end{align}
By Holder's inequality and \eqref{Change1}, we have
\begin{align*}
\|\bar{f}\|_{M_{r,\phi}(\omega_\alpha)}& =\sup_{k \in \mathbb{Z}}  \phi(k)\Bigg( \dfrac{1}{|B^k|} \int_{B^k}\Bigg|\dfrac{1}{|y|(1-q^{-1})}\int_{S(0,|y|)} f(t) \di t\Bigg|^r |y|^\alpha\di y\Bigg)^{\frac{1}{r}}\\
& \leq \sup_{k \in \mathbb{Z}}\phi(k)\Bigg( \dfrac{1}{|B^k|} \int_{B^k}\Bigg(\int_{S(0,|y|)} |f(t)|^r \di t\Bigg)|y|^{\alpha-1} (1-q^{-1})^{-1}\di y\Bigg)^{\frac{1}{r}}\\
& = \sup_{k \in \mathbb{Z}}\dfrac{\phi(k)}{(1-q^{-1})^{\frac{1}{r}}}\Bigg( \dfrac{1}{|B^k|} \int_{B^k}\Bigg(\int_{|t|=|y|} |f(t)|^r |y|^{-1} \di t\Bigg)|y|^{\alpha} \di y\Bigg)^{\frac{1}{r}}\\
& = \sup_{ k \in \mathbb{Z}}\dfrac{\phi(k)}{(1-q^{-1})^{\frac{1}{r}}}\Bigg( \dfrac{1}{|B^k|} \int_{B^k}\Bigg(\int_{|t|=1} |f(ty)|^r\di t\Bigg)|y|^{\alpha} \di y\Bigg)^{\frac{1}{r}}\\
& = \sup_{ k \in \mathbb{Z}}\dfrac{\phi(k)}{(1-q^{-1})^{\frac{1}{r}}}\Bigg( \dfrac{1}{|B^k|} \int_{|t|=1}\Bigg(\int_{B^k} |f(ty)|^r |y|^{\alpha} \di y\Bigg)\di t\Bigg)^{\frac{1}{r}}\\
& = \sup_{ k \in \mathbb{Z}}\dfrac{\phi(k)}{(1-q^{-1})^{\frac{1}{r}}}\Bigg[\int_{|t|=1}\di t\Bigg]^{\frac{1}{r}} \Bigg( \dfrac{1}{|B^k|}\int_{B^k} |f(y)|^r |y|^{\alpha} \di y\Bigg)^{\frac{1}{r}}\\
& = \sup_{ k \in \mathbb{Z}}\phi(k)\Bigg( \dfrac{1}{|B^k|}\int_{B^k} |f(y)|^r |y|^{\alpha} \di y\Bigg)^{\frac{1}{r}}\\
&=\|f\|_{M_{r,\phi}(\omega_\alpha)}
\end{align*}
which, combined with \eqref{samegandf}, yields
\begin{align*}
\dfrac{\|\mathscr{T}f\|_{M_{r,\phi}(\omega_\alpha)}}{\|f\|_{M_{r,\phi}(\omega_\alpha)}} \leq \dfrac{\|\mathscr{T}\bar{f}\|_{M_{r,\phi}(\omega_\alpha)}}{\|\bar{f}\|_{M_{r,\phi}(\omega_\alpha)}}
\end{align*}
whereby the result follows.
\end{proof}

\section{Quantitative boundedness result}\label{sec:5}

\begin{theorem} \label{Main_Thm}
Let $K$ be a local field, $1 < r < \infty$, $\alpha >0$,  and $ \phi \in \Phi_{r,q}^{\emph{sm}}$, where $q=|B|^{-1}$.
If the kernel $\mathscr{K}$ satisfy
\begin{multline} \label{Main_Thm_1}
C_{r,q} \\ := (1-q^{-1})\Bigg[\mathscr{K}(1,1)~+~\sum_{l=1}^{\infty}\Bigg(\mathscr{K}(1,q^{l})~q^{l(1-\alpha/r)}~+~\mathscr{K}(1,q^{-l})~q^{l(\alpha/r + 1/r -1)}\Bigg)  \Bigg] < \infty,
\end{multline}
then
\begin{align*}
\|\mathscr{T}f\|_{M_{r,\phi}(\omega_\alpha)} \leq C_{r,q} \|f\|_{M_{r,\phi}(\omega_\alpha)},
\end{align*}
where $\mathscr{T}$ is the operator in \eqref{H1}.
\end{theorem}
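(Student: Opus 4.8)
The plan is to reduce the estimate to a pointwise-in-$k$ bound on the Morrey average of $\mathscr{T}f$ over $B^k$, and then exploit homogeneity of the kernel together with the dilation estimates from Lemma \ref{Dilation}. First I would decompose $K^*$ into the shells $S^{m}=\{t:|t|=q^{-m}\}$, $m\in\mathbb Z$, and write $\mathscr{T}f=\sum_{m\in\mathbb Z}\mathscr{T}(f\chi_{S^m})$. On each shell $|t|=q^{-m}$ is constant, so $\mathscr{K}(|s|,|t|)=\mathscr{K}(|s|,q^{-m})$ there; using the homogeneity \eqref{ch4_homo} with $\xi=|t|^{-1}$ I can write $\mathscr{K}(|s|,|t|)=|t|^{-1}\mathscr{K}(|s|/|t|,1)$, which will be the device that turns each shell contribution into a dilate of a fixed building block. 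The cleaner route, which I expect to use, is to pass through the radial reduction: by the Proposition we may assume $f$ is radial, and then $\mathscr{T}f(s)$ depends only on $|s|$ and is an explicit sum over shells, namely $\mathscr{T}f(s)=\sum_{m}\mathscr{K}(|s|,q^{-m})\,|S^m|\,f_m$ where $f_m$ is the (constant) value of $f$ on $S^m$.

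The key steps, in order, are: (i) fix $k\in\mathbb Z$ and estimate $\phi(k)\big(|B^k|^{-1}\int_{B^k}|\mathscr{T}f(y)|^r\omega_\alpha(y)\,dy\big)^{1/r}$; (ii) split the defining integral in \eqref{H1} according to whether $|t|\le |s|$ (the "Hardy part") or $|t|>|s|$ (the "dual Hardy part"), and on each piece substitute $t\mapsto |s|\,t'$ so that the kernel becomes $|s|^{-1}\mathscr{K}(1,|t'|)$ by \eqref{ch4_homo}; (iii) recognize the resulting inner expression as $\mathscr{K}(1,q^{l})$- or $\mathscr{K}(1,q^{-l})$-weighted copies of $\mathcal{D}_{\tau}f$ with $|\tau|=|s|^{\pm1}q^{\mp l}$ running over the shells, and apply Lemma \ref{Dilation} to each dilate — this is exactly where the two cases $0<|\tau|<1$ and $|\tau|\ge1$ of \eqref{111}–\eqref{222} produce the two exponents $q^{l(1-\alpha/r)}$ and $q^{l(\alpha/r+1/r-1)}$ appearing in \eqref{Main_Thm_1}; (iv) sum the resulting geometric-type series in $l$, pull out the factor $(1-q^{-1})$ coming from $|S^l|=q^{-l}(1-q^{-1})$, and collect the constant as $C_{r,q}$; (v) take the supremum over $k$ and use submultiplicativity of $\phi$ together with \eqref{Condition1}–\eqref{Condition2} to absorb the shift in the argument of $\phi$, so that no extra $k$-dependence survives.

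I expect the main obstacle to be step (iii): organizing the shell sum so that each term is genuinely a dilation operator applied to $f$ and the dilation parameters line up with the hypotheses of Lemma \ref{Dilation}, rather than a mixture that one cannot control term by term. Concretely, after the substitution $t\mapsto |s|t'$ the "same" dilate $\mathcal{D}_\tau f$ must be estimated by $\|f\|_{M_{r,\phi}(\omega_\alpha)}$ uniformly, and the bookkeeping of which $|\tau|$ is $<1$ versus $\ge1$ has to be tracked carefully against the sign of $l$; getting the exponents $1-\alpha/r$ and $\alpha/r+1/r-1$ exactly right is the delicate computation. A secondary technical point is justifying the interchange of the (infinite) shell sum with the $L^r(B^k,\omega_\alpha)$ norm — this should follow from Minkowski's integral inequality in $L^r$ (equivalently, the triangle inequality for the Morrey norm applied to the partial sums) once one knows the series of norms converges, which is precisely the content of \eqref{Main_Thm_1}. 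Everything else — the change of variables \eqref{Change1}, the evaluation of $\omega_\alpha(B^k)$ as in \eqref{power1}, and the geometric summation — is routine.
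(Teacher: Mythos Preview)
Your approach is essentially the paper's: substitute $t=s\tau$ (an element of $K$, not $|s|\,t'$) so that $\mathscr{T}f(x)=\int_{K^*}\mathscr{K}(1,|\tau|)\,(\mathcal{D}_\tau f)(x)\,d\tau$, decompose $\tau$ over the shells $S^l$, apply Minkowski's integral inequality to pass the Morrey norm inside, and then invoke Lemma~\ref{Dilation} on each $\|\mathcal{D}_\tau f\|_{M_{r,\phi}(\omega_\alpha)}$ according to whether $|\tau|\ge1$ ($l\le0$) or $|\tau|<1$ ($l\ge1$), summing with $|S^l|=q^{-l}(1-q^{-1})$ to obtain $C_{r,q}$. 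Two simplifications relative to your outline: after the substitution the dilation parameter is just $\tau$ (no $|s|$ appears in it, so your step~(iii) is cleaner than you anticipate), and the radial reduction together with your step~(v) are unnecessary, since Lemma~\ref{Dilation} already absorbs the submultiplicativity and the $\Phi_{r,q}$ conditions, leaving no residual $k$-dependence.
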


\begin{proof}
After the change of variable   $y=x\tau$  and  using the relation $\di y=|x| \di \tau$, we obtain
\begin{align*}
\mathscr{T}f(x) & = \int_{K^*} \mathscr{K}(|x|, |x\tau|)f(\tau x)|x|\di \tau \\ 
& =  \int_{K^*} |x|^{-1}\mathscr{K}(1,|\tau| )f(\tau x)|x|\di \tau \\
& =  \int_{K^*}\mathscr{K}(1,|\tau| )f(\tau x)\di \tau \\ 
& = \sum_{l=-\infty}^{\infty}  \int_{S^l} \mathscr{K}(1,|\tau| )\mathcal{D}_\tau f(x)\di \tau \\
& = \sum_{l=-\infty}^{\infty}\mathscr{K}(1,q^{-l})~\int_{S^l}\mathcal{D}_\tau f(x)\di \tau. 
\end{align*}

Applying $\|\cdot\|_{M_{r,\phi}(\omega_\alpha)}$ norm and Minkowski's integral inequality, we get  
\begin{align*}
&\|\mathscr{T}f\|_{M_{r,\phi}(\omega_\alpha)}  \leq \sum_{l=-\infty}^{\infty}\mathscr{K}(1,q^{-l}) \sup_{k \in \mathbb{Z}}  \phi(k)\Bigg( \dfrac{1}{|B^k|} \int_{B^k}\Bigg|\int_{S^l}\mathcal{D}_\tau f(x)\di \tau\Bigg|^r |x|^\alpha\di x\Bigg)^{\frac{1}{r}}\\
& \leq \sum_{l=-\infty}^{\infty}\mathscr{K}(1,q^{-l}) \int_{S^l} \Bigg(\sup_{k \in \mathbb{Z}}  \phi(k)\Bigg( \dfrac{1}{|B^k|} \int_{B^k}|\mathcal{D}_\tau f(x)|^r |x|^\alpha\di x \Bigg)^{\frac{1}{r}} \Bigg) \di \tau\\
& = \sum_{l=-\infty}^{\infty}\mathscr{K}(1,q^{-l})~\int_{S^l}\|\mathcal{D}_\tau f(x)\|_{M_{r,\phi}(\omega_\alpha)}\di \tau \\
& = \sum_{l=-\infty}^{0}\mathscr{K}(1,q^{-l})~\int_{S^l}\|\mathcal{D}_\tau f(x)\|_{M_{r,\phi}(\omega_\alpha)}\di \tau +\sum_{l=1}^{\infty}\mathscr{K}(1,q^{-l})~\int_{S^l}\|\mathcal{D}_\tau f(x)\|_{M_{r,\phi}(\omega_\alpha)}\di \tau \\
& \leq \sum_{l=-\infty}^{0}\mathscr{K}(1,q^{-l})~\int_{S^l}C|\tau|^{-\alpha/r}\|f\|_{M_{r,\phi}(\omega_\alpha)}\di \tau +\sum_{l=1}^{\infty}\mathscr{K}(1,q^{-l})~\int_{S^l} C|\tau|^{-(1+\alpha)/r} \|f\|_{M_{r,\phi}(\omega_\alpha)}\di \tau \\
& = \sum_{l=0}^{\infty}\mathscr{K}(1,q^{l})~\int_{S^{-l}}C |\tau|^{-\alpha/r} \|f\|_{M_{r,\phi}(\omega_\alpha)}\di \tau +\sum_{l=1}^{\infty}\mathscr{K}(1,q^{-l})~\int_{S^l} C|\tau|^{-(1+\alpha)/r} \|f\|_{M_{r,\phi}(\omega_\alpha)}\di \tau \\
& = C\Bigg[\sum_{l=0}^{\infty}\mathscr{K}(1,q^{l})~|S^{-l}| q^{-l\alpha/r}\|f\|_{M_{r,\phi}(\omega_\alpha)} +\sum_{l=1}^{\infty}\mathscr{K}(1,q^{-l})~|S^l|q^{l(1+\alpha)/r} \|f\|_{M_{r,\phi}(\omega_\alpha)} \Bigg] \\
& = C(1-q^{-1})\Bigg[\mathscr{K}(1,1)~+~\sum_{l=1}^{\infty}\mathscr{K}(1,q^{l})~q^lq^{-l\alpha/r}  ~+~\sum_{l=1}^{\infty}\mathscr{K}(1,q^{-l})~q^{-l}q^{l(1+\alpha)/r}  \Bigg]\|f\|_{M_{r,\phi}(\omega_\alpha)} \\
& = C(1-q^{-1})\Bigg[\mathscr{K}(1,1)~+~\sum_{l=1}^{\infty}\Bigg(\mathscr{K}(1,q^{l})~q^{l(1-\alpha/r)}~+~\mathscr{K}(1,q^{-l})~q^{l(\alpha/r + 1/r -1)}\Bigg)  \Bigg]\|f\|_{M_{r,\phi}(\omega_\alpha)} \\
&= C_{r,q} \|f\|_{M_{r,\phi}(\omega_\alpha)},
\end{align*} 
which ends the proof.
\end{proof}

\section{A Particular Result}
In this section, we will obtain the Hardy--Littlewood--P\'olya inequality on the $M_{r,\phi}(\omega_\alpha),$ by taking the particular kernel $\mathscr{K}$ in the operator $\mathscr{T}$ defined by \eqref{H1}. 
\begin{theorem}
Suppose $1 < r < \infty, ~ \alpha >0$ and $ \phi \in \Phi_{r,q}^{\emph{sm}}.$ If $\alpha + 1 < r,$ then the operator defined $\mathscr{P},$ defined by \eqref{Hardy_Littlewood}, is bounded on $M_{r,\phi}(\omega_\alpha).$
\end{theorem}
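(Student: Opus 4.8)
The plan is to apply Theorem \ref{Main_Thm} directly to the kernel $\mathscr{K}(s,t) = 1/\max\{s,t\}$, so that the entire content of the proof reduces to verifying that the constant $C_{r,q}$ in \eqref{Main_Thm_1} is finite under the hypothesis $\alpha+1 < r$. First I would observe that this kernel is non-negative and measurable on $(0,\infty)\times(0,\infty)$ and satisfies $\mathscr{K}(\xi s,\xi t) = \xi^{-1}\mathscr{K}(s,t)$ for $\xi>0$, so it is homogeneous of degree $-1$; consequently $\mathscr{P}$ is a special case of the operator $\mathscr{T}$ in \eqref{H1}, and Theorem \ref{Main_Thm} is applicable as soon as $C_{r,q}<\infty$.

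Next I would evaluate the kernel at the dyadic points occurring in \eqref{Main_Thm_1}. One has $\mathscr{K}(1,1) = 1/\max\{1,1\} = 1$; for $l \geq 1$, $\mathscr{K}(1,q^{l}) = 1/\max\{1,q^{l}\} = q^{-l}$ and $\mathscr{K}(1,q^{-l}) = 1/\max\{1,q^{-l}\} = 1$. Substituting these into the definition of $C_{r,q}$ and simplifying the exponents via $q^{-l}\,q^{l(1-\alpha/r)} = q^{-l\alpha/r}$ and $q^{l(\alpha/r + 1/r - 1)} = q^{l(\alpha+1-r)/r}$ gives
\begin{align*}
C_{r,q} = (1-q^{-1})\left[1 + \sum_{l=1}^{\infty}\left(q^{-l\alpha/r} + q^{l(\alpha+1-r)/r}\right)\right].
\end{align*}

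Finally I would note that the first series $\sum_{l\geq 1} q^{-l\alpha/r}$ is geometric with ratio $q^{-\alpha/r}<1$ and hence converges for every $\alpha>0$, while the second series $\sum_{l\geq 1} q^{l(\alpha+1-r)/r}$ is geometric with ratio $q^{(\alpha+1-r)/r}$, which is $<1$ precisely when $\alpha+1-r<0$, i.e. exactly under the assumed condition $\alpha+1<r$. Summing the two geometric series yields
\begin{align*}
C_{r,q} = (1-q^{-1})\left[1 + \frac{q^{-\alpha/r}}{1-q^{-\alpha/r}} + \frac{q^{(\alpha+1-r)/r}}{1-q^{(\alpha+1-r)/r}}\right] < \infty,
\end{align*}
and Theorem \ref{Main_Thm} then gives $\|\mathscr{P}f\|_{M_{r,\phi}(\omega_\alpha)} \leq C_{r,q}\|f\|_{M_{r,\phi}(\omega_\alpha)}$, which is the asserted boundedness. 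There is no genuine obstacle beyond bookkeeping of exponents; the only point requiring a moment of care is recognizing that $\alpha>0$ alone controls the first series while the hypothesis $\alpha+1<r$ is exactly what is needed to make the second series summable.
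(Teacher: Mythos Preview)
Your proposal is correct and follows essentially the same route as the paper: both apply Theorem~\ref{Main_Thm} to the kernel $\mathscr{K}(s,t)=1/\max\{s,t\}$, reduce the proof to checking $C_{r,q}<\infty$, and observe that the two resulting geometric series converge by $\alpha>0$ and $\alpha+1<r$ respectively. Your write-up is in fact slightly more complete, since you verify the degree~$-1$ homogeneity explicitly and record the closed-form value of $C_{r,q}$.
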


\begin{proof}
Let $\mathscr{K}(s,t) = \frac{1}{\max\{|s|,~|t|\}}.$ Then the result follows, according to Theorem \ref{Main_Thm}, if $C_{r,q} < \infty.$ We have 
\begin{align*}
C_{r,q} & = (1-q^{-1})\Bigg[\mathscr{K}(1,1)~+~\sum_{l=1}^{\infty}\Bigg(\mathscr{K}(1,q^{l})~q^{l(1-\alpha/r)}~+~\mathscr{K}(1,q^{-l})~q^{l(\alpha/r + 1/r -1)}\Bigg)  \Bigg] \\
& = (1-q^{-1})\Bigg[1~+~\sum_{l=1}^{\infty}\Bigg(\dfrac{q^{l(1-\alpha/r)}}{q^l}~+~q^{l(\alpha/r + 1/r -1)}\Bigg)  \Bigg] \\
& = (1-q^{-1})\Bigg[1~+~\sum_{l=1}^{\infty}\Bigg(q^{-l\alpha/r}~+~q^{l(\alpha/r + 1/r -1)}\Bigg)  \Bigg].
\end{align*}

Because $\alpha > 0$ and $\alpha + 1 < r,$ it follows that both the geometric series $\sum_{l=1}^{\infty}q^{-l\alpha/r} $ and $\sum_{l=1}^{\infty}q^{l(\alpha/r + 1/r -1)}$ are convergent. Therefore, we get the finiteness of $C_{r,q}.$
\end{proof}

\bibliographystyle{amsplain}

\begin{thebibliography}{10}

\bibitem{AA} Adams, D. R., and Xiao, J.  Morrey spaces in harmonic analysis. Ark. Mat., 50(2), 201--230 (2012).

\bibitem{AA1} Adams, D. R.  Morrey spaces. Springer International Publishing, (2015).

\bibitem{1} Adams, D. R. A note on Riesz potentials, Duke Math. J., 42, 765--778, (1975).

\bibitem{QJ1} Ashraf, S. and Jahan, Q. Dilation Operators in Besov Spaces over Local Fields, Adv. Oper. Theory, 8(2), (2023).

\bibitem{QJ2} Ashraf, S. and Jahan, Q. Singular Integral Operators with Rough Kernel on Function Spaces Over Local Fields. Complex Anal. Oper. Theory, 17(108), (2023).

\bibitem{Ashraf} Ashraf, S., and Rafeiro, H.  $p$-Adic Hardy--Hilbert--Type Integral Operators on Block Spaces.  $p$-Adic Numbers Ultrametric Anal. Appl., 17(1), 1-15, (2025).





\bibitem{bib1} Albeverio, S., Khrennikov, A. Yu. and Shelkovich, V. M. Harmonic analysis in the $p$-adic Lizorkin spaces: fractional operators, pseudo-differential equations, $p$-adic wavelets, Tauberian theorems, J. Fourier Anal. Appl., 12(4), 393--425, (2006).




\bibitem{BJ3} Behera, B. and Jahan, Q. Wavelet analysis on local fields of positive characteristic. Springer, (2021)

\bibitem{Behera1} Behera, B. Hardy and Hardy-–Littlewood–-P\'olya operators and their commutators on local fields. Period. Math. Hung., 89(2), 318--334, (2024).


\bibitem{adic4}  Chuong, N. M., and Duong, D. V.  The $p$-adic Hardy-Ces\`{a}ro operators on weighted Morrey--Herz space. $p$-Adic Numbers Ultrametric Anal. Appl., 8(3), 204-–216, (2016).


\bibitem{9} Chiarenza, F. and Frasca, M. Morrey spaces and Hardy Littlewood maximal function. Rend Mat., 7, 273-279, (1987).

\bibitem{W4} Coifman, R. R. and Fefferman, C. Weighted norm inequalities for maximal functions and singular integrals. Stud. Math., 51, 241-–250, (1974).







\bibitem{adic10} Fu, Z. W., Wu, Q. Y., and Lu, S. Z.  Sharp estimates of $p$-adic Hardy and Hardy-Littlewood-P\'{o}lya operators. Acta Math. Sin., Engl. Ser., 29(1), 137--150, (2013).










\bibitem{Ho} Ho, K. P.  Singular integral operators for rearrangement-invariant Morrey spaces on local fields. Bol. Soc. Mat. Mexicana, 29(1), (2023).














\bibitem{Pseudo} Kochubei, A.N.  Pseudo-differential Equations and Stochastics over Non-Archimedean Fields, Marcel Dekker Inc., New York/Basel, (2001).




\bibitem{Kyprianou2006-qe} Kyprianou, A.E.: Introductory lectures on fluctuations of L'evy processes with applications. Universitext. Springer, Berlin, Germany (2006) 



\bibitem{BM} Kozono, H. and Yamazaki, M.  Semilinear heat equations and the Navier-Stokes equation with distributions in new function spaces as initial data. Comm. Partial Differential Equations, 19(5-6), 959--1014, (1994).

\bibitem{W3} Komori, Y. and Shirai, S. Weighted Morrey spaces and a singular integral operator. Math. Nachr., 282(2), 219–-231, (2009).








\bibitem{Morrey}  Morrey, C. B. On the solutions of quasi-linear elliptic partial differential equations.  43(1),  126–-166, (1938).





\bibitem{W1} Muckenhoupt, B. Weighted norm inequalities for the Hardy maximal function. Trans. Am. Math. Soc., 165, 207–-226, (1972).

\bibitem{W2} Muckenhoupt, B. and Wheeden, R.L. Weighted norm inequalities for fractional integrals. Trans. Am. Math. Soc., 192, 261-–274, (1974).


\bibitem{Nurul} Molla, M. N. and Behera, B. Weighted norm inequalities for maximal operator of Fourier series. Adv. Oper. Theory, 7(13), (2022).








\bibitem{Morrey3} Nakamura, S. Generalized weighted Morrey spaces and classical operators. Math. Nachr., 289(17-18), 2235--2262, (2016).




\bibitem{10} Nakai, E. Hardy  maximal operator, singular integral operators and  potentials on generalized Morrey spaces. Math. Nachr., 166, 95--103, (1994).



\bibitem{K2} Phillips, K. and Taibleson, M. Singular integrals in several variables over a local field. Pacific J. Math., 30, 209-–231, (1969).





\bibitem{Raf} Rafeiro, H., Samko, N., and Samko, S.  Morrey-Campanato spaces: an overview. Operator Theory, Pseudo-Differential Equations, and Mathematical Physics: The Vladimir Rabinovich Anniversary Volume, 293--323, (2013)



\bibitem{adic1} Sarfraz, N., Aslam, M., Zaman, M., and Jarad, F.  Estimates for $p$-adic fractional integral operator and its commutators on $p$-adic Morrey–Herz spaces. J. Inequal. Appl. 2022(1), 1--17, (2022).

\bibitem{sawano2020morrey} Sawano, Y., Di Fazio, G., Hakim, D.I.: Morrey Spaces: Introduction and Applications to Integral Operators and PDE's, Volume II. Chapman \& Hall/CRC Monographs and Research Notes in Mathematics. CRC Press (2020).




\bibitem{11} Sawano, Y.,  Sugano, S. and Tanaka, H. Generalized fractional integral operators and fractional maximal operators in the framework of Morrey spaces. Trans. Amer. Math. Soc., 363(12), 6481--6503, (2011).



\bibitem{Morrey1} Sawano, Y.  A Thought on Generalized Morrey Spaces. Journal of the Indonesian Mathematical Society, 25(3), 210–-281, (2019).

\bibitem{besov4} Su, W. Harmonic Analysis and Fractal Analysis Over Local Fields and Applications. World Scientific, (2017).





\bibitem{T1} Taibleson, M. Harmonic analysis on $n$-dimensional vector spaces over a local field I. Math. Ann., 176,  191–-207, (1968).
\bibitem{T2} Taibleson, M. Harmonic analysis on $n$-dimensional vector spaces over a local field II. Math. Ann., 186, 1–19, (1970).
\bibitem{T3} Taibleson, M. Harmonic analysis on $n$-dimensional vector spaces over a local field III. Math. Ann., 187, 259-–271, (1970).

\bibitem{MHT} Taibleson, M.  Fourier Analysis on Local Fields. Princeton University Press, (1975).








\bibitem{VV} Vladimirov, V. S. and Volovich, I. V. and Zelenov, E. I. $p$-adic Analysis and Mathematical Physics. World Scientific, (1994).



















































\end{thebibliography}

\end{document}